\documentclass[12pt]{amsart}

\usepackage{amsmath}
\usepackage{amssymb}
\usepackage{amsfonts}
\usepackage{amsopn}
\usepackage{amsthm}
\usepackage[all]{xy}

\swapnumbers

\usepackage{amsthm}

\newtheorem{thm}[equation]{Theorem}
\newtheorem{lem}[equation]{Lemma}
\newtheorem{cor}[equation]{Corollary}
\newtheorem{prop}[equation]{Proposition}

\newtheorem*{thm*}{Theorem}
\newtheorem*{prop*}{Proposition}
\newtheorem*{cor*}{Corollary}
\newtheorem*{lem*}{Lemma}
\newtheorem*{MT*}{Main Theorem}

\theoremstyle{definition}
\newtheorem{defn}[equation]{Definition}
\newtheorem*{defn*}{Definition}

\newtheorem{eg}[equation]{Example}

\theoremstyle{remark}

\newtheorem*{rmk*}{Remark}
\newtheorem*{rmks*}{Remarks}

\makeatletter

\makeatother

\makeatletter

\makeatother

\makeatletter

\makeatother

\makeatletter

\makeatother

\makeatletter

\makeatother

\makeatletter

\makeatother

\makeatletter
\newenvironment{borel*}
{\smallskip \refstepcounter{equation}\noindent{\textbf{\theequation.}}}
{\global\@ignoretrue}
\makeatother

\newcommand{\flist}[1]{\hangindent\leftmargini\textup{(1)}\hskip\labelsep {#1}%
\begin{enumerate}
\setcounter{enumi}{1}
}

\newcommand{\zz}{\mathbb{Z}} 
\newcommand{\rr}{\mathbb{R}} 
\newcommand{\cc}{\mathbb{C}} 
 
\newcommand{\qq}{\mathbb{Q}} 

\newcommand{\A}{\mathrm{A}}

\newcommand{\D}{\mathrm{D}}
\newcommand{\Kill}{\mathrm{Kill}}
\newcommand{\E}{\mathrm{E}}
\newcommand{\F}{\mathrm{F}_4}
\newcommand{\G}{\mathrm{G}_2}

\newcommand{\Alt}{\mathrm{Alt}}

\DeclareMathOperator{\Aut}{\mathrm{Aut}}

\DeclareMathOperator{\PGL}{\mathrm{PGL}}
\DeclareMathOperator{\SL}{\mathrm{SL}}

\DeclareMathOperator{\GL}{\mathrm{GL}} 
\DeclareMathOperator{\PSL}{\mathrm{PSL}}

\DeclareMathOperator{\SO}{SO}
\DeclareMathOperator{\PSO}{PSO}
\DeclareMathOperator{\hspin}{HSpin}
\DeclareMathOperator{\Spin}{Spin}

\newcommand{\stbtmat}[4]{\left( \begin{smallmatrix} #1&#2 \\ #3&#4 \end{smallmatrix} \right) }

\newcommand{\ra}{\rightarrow}
\newcommand{\Z}{\mathbb{Z}}
\newcommand{\R}{\mathbb{R}}
\newcommand{\Q}{\mathbb{Q}}
\newcommand{\la}{\lambda}
\newcommand{\Zm}[1]{\Z/{#1}\Z}

\newcommand{\kx}{k^\times}

\newcommand{\qform}[1]{{\left\langle{#1}\right\rangle}}

\numberwithin{equation}{section}

\title{Degree 5 invariant of $E_8$}
\author{Skip Garibaldi}
\address{(Garibaldi) Department of Mathematics \& Computer Science, Emory University, Atlanta, GA 30322, USA}
\email{skip@member.ams.org}
\urladdr{http://www.mathcs.emory.edu/{\textasciitilde}skip/}

\author{Nikita Semenov}
\address{(Semenov) Mathematisches Institut der LMU M\"unchen, Theresienstr.~39,
80333~M\"unchen, Germany}
\email{semenov@mathematik.uni-muenchen.de}

\date{{\tt Version of \today.}}

\begin{document}
\maketitle

\section{Introduction}

Throughout this paper, $k$ denotes a field of characteristic 0.  We write $E_8$ for the split simple algebraic group with Killing-Cartan type $E_8$.  The Galois cohomology set $H^1(k, E_8)$ classifies simple algebraic groups of type $E_8$ over $k$.  One of the goals of the theory of algebraic groups over arbitrary fields is to understand the groups of type $E_8$, equivalently, understand the set $H^1(k, E_8)$.

The main tool is the \emph{Rost invariant} 
\[
r_{E_8} \!: H^1(*, E_8) \ra H^1(*, \mu_{60}^{\otimes 2})
\]
discovered by Markus Rost and explained in Merkurjev's portion of the book \cite{GMS}.  It is a morphism of functors from the category of fields over $k$ to the category of pointed sets.  We put
\[
H^1(k, E_8)_0 := \{ \eta \in H^1(k, E_8) \mid \text{$r_{E_8}(\eta)$ has odd order} \}.
\]
In \cite[Corollary 8.7]{S08}, the second author proved that there is a morphism of functors:
\[
u \!: H^1(*, E_8)_0 \ra H^5(*, \Zm2).
\]
This is the degree 5 invariant from the title.

Let now $G$ be a group of type $E_8$.  It corresponds with a canonical element of $H^1(k, E_8)$, so it makes sense to speak of ``the Rost invariant of $G$"; we denote it by $r(G)$.  Suppose now that $r(G)$ has odd order, so $G$ belongs to $H^1(k, E_8)_0$.  The second author also proved in \cite{S08}:
\begin{equation} \label{odd.split}
\parbox{4in}{\emph{$u(G) = 0$ if and only if there is an odd-degree extension of $k$ that splits $G$.}}
\end{equation}
For example, the compact group $G$ of type $\E_8$ over $\R$ has Rost invariant zero and $u(G) = (-1)^5$.

As an obvious corollary, we have: 
\begin{equation}
\parbox{4in}{\emph{If $k$ has cohomological dimension $\le 2$, then every $k$-group of type $E_8$ is split by an odd-degree extension of $k$.}}
\end{equation}
Serre's ``Conjecture II" for groups of type $E_8$ is that in fact every group of type $E_8$ over such a field is split.

The purpose of this note is to calculate the value of $u(G)$ for certain $G \in H^1(k, E_8)_0$ and to give some applications of $u$.

\section{Tits's construction of groups of type $E_8$}

\begin{borel*} \label{Tits.const}
There are inclusions of algebraic groups $G_2 \times F_4 \subset E_8$, where $G_2$ and $F_4$ denote split groups of those types.  Furthermore, this embedding is essentially unique.  Applying Galois cohomology gives a function $H^1(k, G_2) \times H^1(k, F_4) \ra H^1(k, E_8)$.  The first two sets classify octonion $k$-algebras and Albert $k$-algebras respectively, so this map gives a construction by Galois descent of groups of type $E_8$:
\[
\fbox{octonion $k$-algebras} \times \fbox{Albert $k$-algebras} \ra \fbox{groups of type $E_8$}
\]
Jacques Tits gave concrete formulas on the level of Lie algebras for this construction in \cite{Ti:const}, see also \cite{Jac:ex}.  This method of constructing groups of type $E_8$ is known as the \emph{Tits construction}.  (Really, Tits's construction is more general and gives other kinds of groups as well.  The variety of possibilities is summarized in Freudenthal's magic square as in \cite[p.~540]{Inv}.  However, the flavor in all cases is the same, and this case is the most interesting.)

Our purpose is to compute the value of $u$ on those groups of type $E_8$ with Rost invariant of odd order (so that it makes sense to speak of $u$) and arising from Tits's construction.  We do this in Proposition \ref{pro}.
\end{borel*}

\begin{borel*} \label{FG.inv}
Following \cite{Inv},
we write $f_3(-)$ for the even component of the Rost invariant of an Albert algebra or an octonion algebra (equivalently, a group of type $\F$ or $\G$).  We write $g_3(-)$ for the odd component of the Rost invariant of an Albert algebra; such algebras also have an invariant $f_5$ taking values in $H^5(k, \Zm2)$.  \emph{An Albert algebra $A$ has $g_3(A) = 0$ and $f_5(A) = 0$ iff $A$ has a nonzero nilpotent, iff the group $\Aut(A)$ is isotropic.}
\end{borel*}

\smallskip

Suppose now that $G \in H^1(k, E_8)$ is the image of an octonion algebra $O$ and an Albert algebra $A$.  It follows from a twisting argument as in the proof of Lemma 5.8 in \cite{GQ} that 
\[
r(G) = r_{G_2}(O) + r_{F_4}(A).
\]
In particular, $G$ belongs to $H^1(k, E_8)_0$ if and only if 
$f_3(O) + f_3(A) = 0$ in $H^3(k, \Zm2)$, i.e., if and only if $f_3(O) = f_3(A)$.

\begin{defn}
Define
\[
t \!: H^1(*, F_4) \ra H^1(*, E_8)_0
\]
by sending an Albert $k$-algebra $A$ to the group of type $E_8$ constructed from $A$ and the octonion algebra with norm form $f_3(A)$, via Tits's construction from \ref{Tits.const}.  By the preceding paragraph, $r(G) = g_3(A) \in H^3(k, \Zm3)$, so $G$ does indeed belong to $H^1(k, E_8)_0$.
\end{defn}

\begin{eg} \label{Ti.nil}
\emph{If $A$ has a (nonzero) nilpotent element, then the group $t(A)$ is split.}  Indeed, $g_3(A)$ is zero so $t(A)$ is in the kernel of the Rost invariant.  Also, $t(A)$ is isotropic because it contains the isotropic subgroup $\Aut(A)$, hence $t(A)$ is split by, e.g., \cite[Prop.~12.1(1)]{G:deg16}.
\end{eg}

\begin{eg} \label{Ti.R}
In case $k = \Q$, there are exactly three Albert algebras up to isomorphism.  All have $g_3 = 0$; they are distinguished by the values of $f_3$ and $f_5$.  :
\begin{center}
\begin{tabular}{c|c||c}
$f_3(A)$ & $f_5(A)$ & $t(A)$\\
\hline
$0$ & $0$ & split by Example \ref{Ti.nil}\\
$(-1)^3$ & $0$ & split by Example \ref{Ti.nil}\\
$(-1)^3$ & $(-1)^5$ & anisotropic by \cite[p.~118]{Jac:ex}
\end{tabular}
\end{center}
It follows from Chernousov's Hasse Principle for groups of type $\E_8$ \cite{PlatRap} that for every number field $K$ with a unique real place, the set $H^1(K, \E_8)_0$ has two elements: the split group and the anisotropic group constructed as in the last line of the table.
\end{eg}

\begin{prop}\label{pro}
For every Albert $k$-algebra $A$, we have:
\[
u(t(A)) = f_5(A) \quad \in H^5(k, \Zm2).
\]
\end{prop}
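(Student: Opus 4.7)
The plan is to view both $A \mapsto u(t(A))$ and $A \mapsto f_5(A)$ as normalized cohomological invariants of Albert algebras taking values in $H^5(*, \Zm2)$, and then to invoke the Rost--Serre classification of mod-$2$ invariants of $F_4$-torsors from \cite{GMS}. That classification says every such invariant is an $H^*(k_0, \Zm2)$-linear combination of $1$, $f_3$, and $f_5$; specializing to degree $5$ and to normalized invariants kills the $1$-term, and one obtains
\[
u(t(A)) - f_5(A) = \alpha_3 \cdot f_3(A) + (\alpha_5 - 1) \cdot f_5(A)
\]
for constants $\alpha_3 \in H^2(k_0, \Zm2)$ and $\alpha_5 \in \Zm2$. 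It then suffices to show $\alpha_3 = 0$ and $\alpha_5 = 1$.

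To show $\alpha_3 = 0$, I would specialize to a reduced Albert algebra $A$ with $f_5(A) = 0$ but generic $f_3(A)$. Any reduced $A$ has $g_3(A) = 0$; if also $f_5(A) = 0$, then by \ref{FG.inv} the algebra $A$ contains a nonzero nilpotent, and Example \ref{Ti.nil} gives $u(t(A)) = 0 = f_5(A)$. Plugging into the displayed equation, $\alpha_3 \cdot f_3(A) = 0$ for such $A$; taking $A = H_3(C,1)$ over $k_0(t_1,t_2,t_3)$ with $C$ the octonion algebra whose norm is the generic Pfister form $\pform{t_1,t_2,t_3}$ forces $\alpha_3 = 0$ by a standard specialization argument on the pure symbol $f_3(A) \in H^3(k_0(t_1,t_2,t_3), \Zm2)$.

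To pin down $\alpha_5 = 1$, I would use the anisotropic Albert $\R$-algebra $A_0$ of Example \ref{Ti.R}: its Tits group $t(A_0)$ is the compact form of $\E_8$, for which the Introduction records $u = (-1)^5$, while $f_5(A_0) = (-1)^5$ by construction. Since $(-1)^5 \ne 0$ in $H^5(\R, \Zm2)$ and the $\alpha_3$-term has already been eliminated, this forces $\alpha_5 = 1$.

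The main obstacle is the first reduction: one must verify that $A \mapsto u(t(A))$ genuinely satisfies the hypotheses of the Rost--Serre classification as a natural transformation of functors on field extensions of a fixed base $k_0$, and one must commit to a $k_0$ (naturally the prime field $\Q$ or its algebraic closure) so that the identity obtained over $k_0$-extensions propagates to every characteristic-$0$ field $k$.
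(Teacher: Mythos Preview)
Your overall strategy matches the paper's: write $ut$ as an $H^\bullet(\Q,\Zm2)$-linear combination of $1$, $f_3$, $f_5$ via the Rost--Serre classification over $k_0=\Q$, then determine the coefficients by specialization. The determination of $\alpha_5 = 1$ using the compact real $\E_8$ is exactly what the paper does.

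There is, however, a concrete error in your argument for $\alpha_3 = 0$. The reduced algebra $A = H_3(C,1)$ does \emph{not} in general have $f_5(A) = 0$: for $\Gamma = \mathrm{diag}(\gamma_1,\gamma_2,\gamma_3)$ one has $f_5(H_3(C,\Gamma)) = f_3(C)\cdot(-\gamma_1\gamma_2,-\gamma_2\gamma_3)$, so with $\Gamma = I$ this is $f_3(C)\cdot(-1,-1)$. You must take $k_0 = \Q$ here (not $\overline{\Q}$, since your $\alpha_5$-argument uses $\R$, which does not contain $\overline{\Q}$), and over $\Q$ the class $(-1,-1)$ is nonzero; hence $f_5(A) = (t_1,t_2,t_3,-1,-1) \ne 0$ in $H^5(\Q(t_1,t_2,t_3),\Zm2)$. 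Concretely, over $\R$ the algebra $H_3(C,I)$ with $C$ the real division octonions is the anisotropic Albert algebra on the third line of Example~\ref{Ti.R}, which has $f_5 = (-1)^5 \ne 0$ and contains no nonzero nilpotent.

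The repair is immediate: take instead $\Gamma = \mathrm{diag}(1,-1,1)$, so that one slot of the 2-Pfister becomes trivial and $f_5(A) = 0$ while $f_3(A)=(t_1,t_2,t_3)$ is still generic. With this correction your residue argument forces $\alpha_3 = 0$ and the proof is complete. In fact this corrected route is a bit more direct than the paper's own argument for $\la_2=0$, which instead takes both $f_3$ and $f_5$ generic over $\Q(x,y,z,a,b)$, passes to a generic splitting field of the 5-symbol, and then invokes \cite{OViVo} to control what survives in that kernel; your version avoids both the generic splitting field and the appeal to Orlov--Vishik--Voevodsky.
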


\begin{proof}
The composition $ut$ is an invariant $H^1(*, \F) \ra H^5(*, \Zm2)$, hence is given by
\[
ut(A) = \la_5 + \la_2 \cdot f_3(A) + \la_0 \cdot f_5(A)
\]
for uniquely determined elements $\la_i \in H^i(\Q, \Zm2)$, see \cite[p.~50]{GMS}.  

We apply this formula to each of the three lines in the table from Example \ref{Ti.R}.
Obviously $u$ of the split $\E_8$ is zero, so the first line gives:
\[
0 = u(\text{split $\E_8$}) = \la_5   \quad \in H^5(\Q, \Zm2).
\]
Applying this to the second line gives:
\[
0 = u(\text{split $\E_8$}) = \la_2 \cdot (-1)^3 \in H^5(\Q, \Zm2).
\]
For the last line, 
$u$ of the compact $\E_8$ is $(-1)^5$ by \eqref{odd.split}, see the end of \cite{S08} for details.  We find:
\[
(-1)^5 = u(\text{compact $\E_8$}) = \la_0 \cdot (-1)^5,
\]
so $\la_0$ equals 1 in $H^0(\Q, \Zm2) = \Zm2$.

To show that $\lambda_2=0$ we proceed as follows. Consider the pure transcendental
extension $F=\qq(x,y,z,a,b)$ and let $H$ be the group of type $\F$ with
$f_3(H)=(x,y,z)$, $f_5(H)=f_3(H)\cdot(a,b)$ and $g_3(H)=0$.
Then $ut(H)=f_5(H)+f_3(H)\cdot\lambda_2$.

Let $K$ be a generic splitting field for the symbol $f_5(H)$.
Since $H_K$ is isotropic, the resulting group $t(H)$ of type $\E_8$ is isotropic
over $K$, and, since it has trivial Rost invariant, it splits over $K$ \cite[Prop.~12.1]{G:deg16}.  Obviously, $ut(H)$ is killed by $K$.
Therefore $f_3(H)\cdot\lambda_2$ is zero over $K$.  If $f_3(H) \cdot \lambda_2$ is zero over $F$, then by taking residues we see that $\la_2$ is zero in $H^2(\Q(a,b), \Zm2)$, hence also in $H^2(\Q, \Zm2)$.  Otherwise, $f_3(H) \cdot \la_2$ is equal to $f_5(H)$ by \cite[Theorem~2.1]{OViVo}, and again completing and taking residues with respect to the $x$-, $y$-, and $z$-adic valuations, we find that $\la_2 = (a, b) \in H^2(\Q(a, b), \Zm2)$.  But this is impossible because $\la_2$ is defined over $\Q$.  This proves that $\la_2 = 0$.
\end{proof}

\begin{cor}
Let $G$ be as in Proposition~\ref{pro}. Let
$\Kill_-$ denote the Killing form of $-$ and $\E_8$ the split group.
Then $$\langle 60\rangle(\Kill_G-\Kill_{\E_8})=2^3\cdot u(G)\in I^8(k).$$
\end{cor}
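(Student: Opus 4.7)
Proposition~\ref{pro} reduces the corollary to the statement that, for every Albert $k$-algebra $A$ and the associated octonion algebra $O$ with $n_O = f_3(A)$, one has $\langle 60\rangle(\Kill_{t(A)} - \Kill_{\E_8}) = 2^3 \cdot f_5(A)$ in $I^8(k)$.

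The plan is to compute the left-hand side via the $G_2 \times F_4$-equivariant decomposition $\mathfrak{e}_8 = \mathfrak{g}_2 \oplus \mathfrak{f}_4 \oplus (V_7 \otimes W_{26})$ from \ref{Tits.const}, where $V_7$ and $W_{26}$ are the $7$- and $26$-dimensional irreducible representations (realized respectively on the trace-zero octonions and on the trace-zero Albert elements). Since the embedding is essentially unique and each summand is absolutely irreducible, Schur's lemma forces the restriction of $\Kill_{\mathfrak{e}_8}$ to each piece to be a fixed scalar multiple of the canonical invariant form on that piece. Subtracting the resulting decompositions for $\Kill_G$ and $\Kill_{\E_8}$ yields
\[
\Kill_G - \Kill_{\E_8} = \alpha(\Kill_{\Aut(O)} - \Kill_{G_2}) + \beta(\Kill_{\Aut(A)} - \Kill_{F_4}) + \gamma(n_O^0 \otimes T_A^0 - n_{O_0}^0 \otimes T_{A_0}^0)
\]
for scalars $\alpha, \beta, \gamma \in k^\times$ depending only on the embedding. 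The individual summands can then be rewritten using standard identities: the $G_2$ Killing form difference is a scalar multiple of the $3$-Pfister $n_O$; the $F_4$ Killing form difference has a classical expression (going back to Serre, cf.\ the Book of Involutions) in terms of $f_3(A)$ and $f_5(A)$; and the cross-term expands via $n_O = \langle 1\rangle \perp n_O^0$ and $T_A = 3\langle 1\rangle \perp T_A^0$, with $n_{O_0}$ and $T_{A_0}$ being hyperbolic up to a constant form.

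Substituting these formulas and using the defining relation $f_3(O) = f_3(A)$ that holds for every $G$ in the image of $t$, the contributions in $I^3, \ldots, I^7$ should telescope away, leaving exactly a multiple of $f_5(A)$ in $I^8$. The main obstacle is the numerical bookkeeping: one must verify that the product of the scalars $\alpha, \beta, \gamma$ (computable from the branching rules for the adjoint representation of $E_8$) with the coefficients in the Serre-type formula for the $F_4$ Killing form equals $\langle 60\rangle^{-1} \cdot 2^3$. A useful sanity check is afforded by the compact $\E_8$ of Example~\ref{Ti.R}: over $\R$ both sides specialize to the nontrivial element of $I^8(\R)/I^9(\R) \cong \Zm2$.
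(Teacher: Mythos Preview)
Your plan is sound in outline, and in fact it is essentially the computation that underlies the reference the paper invokes: the paper's own proof is a one-line citation to \cite[13.5 and Example~15.9]{G:deg16} together with Proposition~\ref{pro}.  Those passages in \cite{G:deg16} compute $\Kill_G - \Kill_{\E_8}$ for $G$ arising from the Tits construction precisely via the $\mathfrak{g}_2 \oplus \mathfrak{f}_4 \oplus (V_7 \otimes W_{26})$ decomposition you describe, arriving at a formula in terms of $f_3(A)$ and $f_5(A)$; the corollary then follows immediately once Proposition~\ref{pro} identifies $f_5(A)$ with $u(G)$.  So you are not taking a different route---you are reconstructing the content of the cited reference.

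The only gap is the one you flag yourself: the numerical bookkeeping is the entire content of the result, and you have not done it.  Determining the scalars $\alpha,\beta,\gamma$ and checking that the $I^{<8}$ terms cancel is not a formality---it requires either explicit branching computations or a trace argument, and the constant $\qform{60}$ on the left (the ratio of the Killing form to the normalized invariant form for $\E_8$) and the factor $2^3$ on the right must come out of that calculation.  Your sanity check over $\R$ only detects the class modulo $I^9$, so it cannot pin down the scalar.  If you want a self-contained proof, you will need to actually carry out the computation in \cite[\S13, \S15]{G:deg16}; otherwise, citing that reference (as the paper does) is the honest move.
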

\begin{proof}
Follows from \cite[13.5 and Example 15.9]{G:deg16} and Proposition~\ref{pro}.
\end{proof}

\begin{eg} \label{aniso.eg}
Whatever field $k$ of characteristic zero one starts with, there is an extension $K/k$ that supports an anisotropic 5-Pfister quadratic form $q_5$---one can adjoin 5 indeterminates to $k$, for example.  Let $q_3$ be a 3-Pfister form dividing $q_5$ and let $A$ be the Albert $K$-algebra with $f_d(A) = e_d(q_d)$ for $d = 3, 5$.  The group $G := t(A)$ of type $\E_8$ over $K$ has Rost invariant zero yet $u(G) = f_5(A)$ nonzero by Proposition \ref{pro}.  In particular, $G$ is not split, hence is anisotropic by \cite[Prop.~12.1(1)]{G:deg16}.
\end{eg}

Example 15.9 in \cite{G:deg16} produced anisotropic groups of type $\E_8$ in a similar manner, but used the Killing form to see that the resulting groups were anisotropic; that method does not work if $-1$ is a square in $k$.  Roughly, Example \ref{aniso.eg} above exhibits more anisotropic groups because $u$ is a finer invariant than the Killing form.

\section{Invariants of $H^1(*, \Spin_{16})_0$}

Recall from \cite[pp.~436, 437]{Inv} that the Rost invariant of a class $\eta \in H^1(k, \Spin_{16})$ is given by the formula
\[
r_{\Spin_{16}}(\eta) = e_3(q_\eta) \quad \in H^3(k, \Zm2)
\]
where $q_\eta$ is the 16-dimensional quadratic form in $I^3k$ corresponding to the image of $\eta$ in $H^1(k, \SO_{16})$ and $e_3$ is the Arason invariant.  it follows that $\eta$ belongs to the kernel of the Rost invariant if and only if $q_\eta$ belongs to $I^4 k$.

We can quickly find some invariants of the kernel $H^1(k, \Spin_{16})_0$ of the Rost invariant.  For $\eta$ in that set, $q_\eta$ is $\qform{\alpha_\eta} \gamma$ for some $\alpha_\eta \in \kx$ and some 4-Pfister quadratic form $\gamma$ \cite[X.5.6]{Lam}.  (One can take $\alpha_\eta$ to be any element of $\kx$ represented by $q_\eta$ \cite[X.1.8]{Lam}.)  We define invariants $f_d \!: H^1(*, \Spin_{16})_0 \ra H^d(*, \Zm2)$ for $d = 4, 5$ via:
\[
f_4(\eta) := e_4(q_\eta) \quad \text{and} \quad f_5(\eta) := (\alpha_\eta) \cdot e_4(q_\eta),
\]
where $e_4$ is the usual additive map $I^4(*) \ra H^4(*, \Zm2)$.  If $q_\eta$ is isotropic, then $q_\eta$ is hyperbolic and $e_4(q_\eta)$ is zero, so the value of $f_5(\eta)$ depends only on $\eta$ and not on the choice of $\alpha_\eta$, see \cite[10.2]{G:lens}.

We can identify two more (candidates for) invariants of $H^1(*, \Spin_{16})_0$.   The split $E_8$ has a subgroup isomorphic to $\hspin_{16}$, the nontrivial quotient of $\Spin_{16}$ that is neither adjoint (i.e., not $\PSO_{16}$) nor $\SO_{16}$.  Further, the composition
\[
H^1(*, \Spin_{16}) \ra H^1(*, \hspin_{16}) \ra H^1(*, \E_8) \xrightarrow{r_{E_8}} H^3(*, \Zm{60}(2))
\]
is the Rost invariant of $\Spin_{16}$ \cite[(5.2)]{G:deg16}.  We find a morphism of functors
\[
H^1(*, \Spin_{16})_0 \ra H^1(*, \E_8)_0.
\]
Composing this with the invariant $u$ gives an invariant
\[
u_5 \!: H^1(*, \Spin_{16})_0 \ra H^5(*, \Zm2).
\]
(Roughly speaking, we have used the invariant $u$ of $H^1(*, \E_8)_0$ to get an invariant of $H^1(*, \Spin_{16})_0$ in the same way that Rost used the $f_5$ invariant of $H^1(*, \F)$ to get an invariant of $H^1(*, \Spin_9)$, see \cite{Rost:spin14} or \cite[18.9]{G:lens}.)

The purpose of this section is to prove:

\begin{prop} \label{s16.basis}
The invariants $H^1(*, \Spin_{16})_0 \ra H^\bullet(*, \Zm2)$ form a free $H^\bullet(k, \Zm2)$-module with basis
\[
1, f_4, f_5, u_5, u_6,
\]
where the invariant $u_6$ is given by the formula $u_6(\eta) := (\alpha_\eta) \cdot u_5(\eta)$.
\end{prop}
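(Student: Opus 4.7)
The plan is twofold: establish that the five listed invariants are $H^\bullet(k,\Zm2)$-linearly independent, and that every mod-2 invariant of $H^1(*,\Spin_{16})_0$ lies in their $H^\bullet(k,\Zm2)$-span. A preliminary point is that $u_6$ must be shown to be well defined, i.e., $(\alpha_\eta)\cdot u_5(\eta)$ must be independent of the choice of $\alpha_\eta$; this runs parallel to the argument given above for $f_5$ (the ratio $\alpha'/\alpha$ is represented by $\gamma$, so $\pform{\alpha'/\alpha}\cdot\gamma=0$), combined with functoriality of $u_5$ under base change to $k(\sqrt{\alpha'/\alpha})$ and a residue argument to descend the resulting vanishing to $k$.

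For linear independence, I would exhibit versal examples. Over $F=k(\alpha,t_1,\ldots,t_4)$, the class $\eta_0\in H^1(F,\Spin_{16})_0$ associated to $\qform{\alpha}\pform{t_1,\ldots,t_4}$ has $f_4(\eta_0)=(t_1,\ldots,t_4)$ and $f_5(\eta_0)=(\alpha,t_1,\ldots,t_4)$, which are linearly independent over $H^\bullet(k,\Zm2)$ in $H^\bullet(F,\Zm2)$ by the Milnor conjecture. To separate $u_5$ and $u_6$ from the $f$-invariants, I would use Example~\ref{aniso.eg}: pulling back an anisotropic $\E_8$-torsor $t(A)$ with $u(t(A))=f_5(A)\neq 0$ through a suitable $\Spin_{16}\hookrightarrow\E_8$ produces an $\eta$ with $u_5(\eta)\neq 0$, and a further scalar extension produces one with $u_6(\eta)\neq 0$. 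Degree considerations together with the different symbol-supports of $f_4,f_5$ versus $u_5,u_6$ rule out any nontrivial $H^\bullet(k,\Zm2)$-linear relation among the five.

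For spanning, any invariant $a$ is determined by its value $a(\eta_0)\in H^\bullet(F,\Zm2)$ on the versal $\eta_0$. The strategy is to analyze this element via iterated residues along the discrete valuations centered at $\alpha,t_1,\ldots,t_4$: each residue is itself a mod-2 invariant of a $\Spin_{16}$- or lower-rank-$\Spin$-torsor attached to a quadratic form with fewer free parameters, and by induction on the number of indeterminates it is an $H^\bullet(k,\Zm2)$-combination of the restrictions of $1,f_4,f_5,u_5,u_6$. Reassembling the pieces via the residue exact sequences and matching coefficients with the known values of these five invariants on $\eta_0$ expresses $a$ as an element of their $H^\bullet(k,\Zm2)$-span.

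The hardest step is the spanning argument in degrees 5 and 6, where one must recognize that the part of $a(\eta_0)$ not supported on the 4-Pfister $\gamma$ is captured exactly by $u_5$ and $u_6$, and by nothing else. This should follow from computing $u_5(\eta_0)$ explicitly using Proposition~\ref{pro}, applied to an Albert/octonion pair realizing the $\Spin_{16}\hookrightarrow\E_8$ embedding, and then comparing the outcome with the Milnor-conjecture decomposition of $H^5(F,\Zm2)$ and $H^6(F,\Zm2)$ into symbols supported over $k$ multiplied by the parameters $\alpha$ and the $t_i$.
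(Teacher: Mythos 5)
There is a genuine gap, and it sits at the heart of both your independence and your spanning arguments: you never actually get your hands on $u_5$. Your plan is to ``compute $u_5(\eta_0)$ explicitly using Proposition~\ref{pro}'', but Proposition~\ref{pro} concerns the Tits construction through the subgroup $G_2\times F_4\subset E_8$, whereas $u_5$ is defined by pulling back $u$ along the entirely different embedding $\Spin_{16}\to\hspin_{16}\subset E_8$. The paper supplies no dictionary translating a $\Spin_{16}$-torsor (i.e.\ a form $\qform{\alpha_\eta}\gamma$) into an octonion/Albert pair, so this computation cannot be carried out as stated. Without it, your linear-independence step collapses: $f_5$ and $u_5$ are both degree~$5$ invariants vanishing on the split class, and ``degree considerations together with different symbol-supports'' does not distinguish them --- a priori $u_5$ could equal $f_5$. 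The paper's actual mechanism is to restrict everything to a subgroup $Z\times V\times V\subset\Spin_{16}$ ($Z$ the center, $V$ a Klein four-group mapping into $(\SL_2\times\SL_2)/\mu_2$), prove the induced map on $H^1$ surjects onto the kernel of the Rost invariant (Lemma~\ref{s16.surj}), and then show $u_5$ restricts to $\chi_h\cdot f_4$ while $f_5$ restricts to $\chi_v\cdot f_4$, where $\chi_h\ne\chi_v$ are the two characters of $Z$; the identification of $\chi_h$ uses only an abstract classification (the restriction must be $\chi\cdot f_4$ for some character $\chi$ of $Z$, hence $0$ or $\chi_h$) plus one real example where the compact $E_8$ is hit and $u\ne 0$. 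Nothing in your proposal substitutes for this.

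The spanning argument has a related problem and is also circular. The classification of invariants of the sub-functor $H^1(*,\Spin_{16})_0$ (rather than all of $H^1(*,\Spin_{16})$) is not covered by the standard versal-torsor/unramified-residue machinery you invoke; the paper gets it by injecting the module of invariants into the known module of invariants of $H^1(*,Z\times V\times V)$ via the surjectivity lemma and \cite[5.3]{G:lens}, and then cutting down with Lemma~\ref{s16.zero}. Even granting your residue scheme, matching the degree~$5$ and~$6$ parts of $a(\eta_0)$ against $u_5$ and $u_6$ requires knowing $u_5(\eta_0)$ --- precisely the computation that is missing. Finally, two smaller points: the well-definedness of $u_6$ requires first knowing that $u_5(\eta)=0$ when $q_\eta$ is isotropic, which in the paper is a consequence of the identity $u_5=\chi_h\cdot f_4$ on the subgroup (again the missing computation), not a formal analogue of the $f_5$ argument; and your versal class $\eta_0$ lives most naturally in $H^1(F,\SO_{16})$, so lifting it to $\Spin_{16}$ and controlling the ambiguity by the center is an extra step you would need to address (the paper does so inside Lemma~\ref{s16.surj}).
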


We first replace $\Spin_{16}$ with a more tractable group.  The first author described in \cite[\S11]{G:deg16} a subgroup of $\hspin_{16}$ isomorphic to $\PGL_2^{\times 4}$.  Examining the root system data for this subgroup given in Tables 7B and 11 of that paper, we see that the inverse image of this subgroup in $\Spin_{16}$ is a subgroup $H$ obtained by modding $\SL_2^{\times 4}$ out by the subgroup generated by $(-1, -1, 1, 1)$, $(-1, 1, -1, 1)$, and $(-1, 1, 1, -1)$.  Indeed, in the notation of that paper, each copy of $\PGL_2^{\times 4}$ lifts to a copy of $\SL_2$ with a maximal torus the image of one of the four elements of the root lattice of $D_8$:
\begin{gather*}
\delta_1 + 2 \delta_2 + 3 \delta_3 + 4 \delta_4 + 3 \delta_5 + 2 \delta_6 + \delta_7,  \quad
\delta_1 - \delta_3 + \delta_5 - \delta_7, \\
\delta_1 + \delta_3 - \delta_5 - \delta_7, \quad \text{or} \quad
\delta_1 + 2\delta_2 + \delta_3 - \delta_5 - 2\delta_6 - \delta_7.
\end{gather*}
(Here $\delta_i$ denotes the simple root of $\D_8$ that is denoted $\alpha_i$ in \cite{Bourbaki}.)  The center of each copy of $\SL_2$ has nontrivial element
\[
h_{\delta_1}(-1) \, h_{\delta_3}(-1) \, h_{\delta_5}(-1) \, h_{\delta_7}(-1)
\]
in the notation of \cite{Steinberg}.  This defines a homomorphism $\mu_2 \ra H$ that gives a short exact sequence:
\[
1 \ra \mu_2 \ra H \ra \PGL_2^{\times 4} \ra 1.
\]
The image of $H^1(k, H)$ in $H^1(k, \PGL_2)^{\times 4}$ consists of quadruples $(Q_1, Q_2, Q_3, Q_4)$ of quaternion algebras so that $Q_1 \otimes Q_2 \otimes Q_3 \otimes Q_4$ is split.

Let $\varphi$ map the Klein four-group $V := \Zm2 \times \Zm2$ into $(\SL_2 \times \SL_2)/\mu_2$ via
\[
\varphi(1, 0) := \left( \stbtmat{-1}{0}{0}{1}, \stbtmat{-1}{0}{0}{1} \right) \quad \text{and} \quad
\varphi(0, 1) := \left( \stbtmat{0}{1}{-1}{0}, \stbtmat{0}{1}{-1}{0} \right).
\]
This defines a homomorphism.  Twisting $(\SL_2 \times \SL_2)/\mu_2$ by a pair $(a, b) \in \kx / k^{\times 2} \times \kx / k^{\times 2} = H^1(k, V)$ gives $(\SL(Q) \times \SL(Q))/\mu_2$, where $Q$ denotes the quaternion algebra $(a, b)$.  (Of course, composing $\varphi$ with either of the projections $(\SL_2 \times \SL_2)/\mu_2 \ra \PGL_2$ sends $(a, b)$ to the same quaternion algebra $Q$.)  The composition
\[
V \times V \xrightarrow{\varphi \times \varphi} (\SL_2 \times \SL_2)/\mu_2 \times (\SL_2 \times \SL_2)/\mu_2 \ra H
\]
gives a map whose image does not meet the center of $\Spin_{16}$, which we denote by $Z$.  This gives a homorphism
$Z \times V \times V \ra \Spin_{16}$.

\begin{lem} \label{s16.surj}
For every extension $K/k$,
the map $Z \times V \times V \ra \Spin_{16}$ defined above induces a map $H^1(K, Z \times V \times V) \ra H^1(K, \Spin_{16})_0$ that is a surjection.
\end{lem}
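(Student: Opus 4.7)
The plan is as follows. Let $\eta \in H^1(K, \Spin_{16})_0$; its associated $16$-dimensional form $q_\eta$ lies in $I^4 K$, and by Lam X.5.6 one can write $q_\eta = \langle \alpha \rangle \pi$ for some $\alpha \in K^\times$ and some $4$-fold Pfister form $\pi$. Since every $4$-Pfister is a tensor of two $2$-Pfisters, write $\pi = \pform{a_1, b_1} \otimes \pform{a_2, b_2}$ and set $Q_i := (a_i, b_i)$. The strategy is to lift the ``$4$-Pfister part'' $\pi$ to a class $\xi$ coming from $V \times V$, then use the $Z$-factor to both rescale by $\alpha$ and exhaust the fiber of $H^1(K, \Spin_{16}) \to H^1(K, \SO_{16})$ over $q_\eta$.

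First I would identify the image $\xi \in H^1(K, \Spin_{16})$ of $((a_1, b_1), (a_2, b_2)) \in H^1(K, V \times V)$ under $\varphi \times \varphi$. Its projection to $H^1(K, \PGL_2)^{\times 4}$ is the quadruple $(Q_1, Q_1, Q_2, Q_2)$, which automatically satisfies the constraint of split tensor product. Using the coroot description of $H \hookrightarrow \Spin_{16}$ from Tables 7B and 11 of \cite{G:deg16} and the fact that the restriction of the vector representation of $\Spin_{16}$ to the four copies of $\SL_2$ in $H$ is the tensor product of the standard $2$-dimensional representations, one computes the underlying $16$-dimensional form of $\xi$ to be exactly $n_{Q_1} \otimes n_{Q_2} = \pi$.

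Next I would use the $Z$-factor. The center $Z \cong \mu_2 \times \mu_2$ of $\Spin_{16}$ decomposes as the kernel $\mu_2$ of $\Spin_{16} \to \SO_{16}$ together with a complementary $\mu_2$ mapping isomorphically onto $Z(\SO_{16})$. A direct Galois-cocycle computation shows that twisting $\xi$ by a class $\alpha \in H^1(K, \mu_2)$ in the complementary factor rescales the underlying form by $\langle \alpha \rangle$, whereas twisting by the kernel factor leaves the form unchanged but acts transitively on each fiber of $H^1(K, \Spin_{16}) \to H^1(K, \SO_{16})$ (with stabilizer the image of the spinor norm). Combining, there exists $z \in H^1(K, Z)$ so that $(z, (a_1, b_1), (a_2, b_2)) \in H^1(K, Z \times V \times V)$ maps to $\eta$.

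The main obstacle is Step~1, i.e., identifying $q_\xi$ as precisely $\pi$. This requires carefully tracking the cocycle through the composition $V \times V \to (\SL_2 \times \SL_2)/\mu_2 \times (\SL_2 \times \SL_2)/\mu_2 \to H \to \Spin_{16} \to \SO_{16}$, using the explicit formulas for $\varphi$ together with the coroot data from \cite{G:deg16}, and in particular verifying that each ``paired'' tensor $V_i \otimes V_j$ descends to $Q$ with form $n_Q$ rather than some similar variant. Step~2 is then a routine consequence of the central extension $1 \to \mu_2 \to \Spin_{16} \to \SO_{16} \to 1$ and the behavior of $Z(\SO_{16})$ on quadratic forms.
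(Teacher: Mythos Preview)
Your proposal is correct and follows essentially the same strategy as the paper: write $q_\eta$ as a scalar multiple of a product of two $2$-Pfister norms, lift the $2$-Pfisters through $V\times V$, and then correct by a class in $H^1(K,Z)$.

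The only difference worth mentioning is that the paper compares in $H^1(K,\PSO_{16})$ rather than in $H^1(K,\SO_{16})$. Since $\qform{\alpha}\,q_1 q_2$ and $q_1 q_2$ are similar, the image of $(0,Q_1,Q_2)$ and of $\eta$ already agree in $H^1(K,\PSO_{16})$, and the fiber of $\Spin_{16}\to\PSO_{16}$ is acted on transitively by the full center $Z$. This lets the paper bypass your Step~1 verification that the form is exactly $\pi$ (rather than merely similar to $\pi$) and avoids splitting $Z$ into the ``rescaling'' $\mu_2$ and the ``spin-fiber'' $\mu_2$; a single $\zeta\in H^1(K,Z)$ does both jobs at once. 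Your route is perfectly valid, just slightly more laborious.
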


\begin{proof}
Fix $\nu \in H^1(K, Z \times V \times V)$; we write $\zeta$ for its image in $H^1(K, Z)$ and $Q_1, Q_2$ respectively for its images under the two projections $H^1(K, Z \times V \times V) \ra H^1(K, V) \ra H^1(K, \PGL_2)$.  It follows from the description of the subgroup $\PGL_2^{\times 4}$ in $\hspin_{16}$ and the map $H^1(K, \hspin_{16}) \ra H^1(K, \PSO_{16})$ in \cite[\S4]{G:deg16} that the image of $\nu$ in $H^1(K, \PSO_{16})$ is the same as the image of the quadratic form $q_1 \otimes q_2$ under $H^1(K, \SO_{16}) \ra H^1(K, \PSO_{16})$, where $q_i$ denotes the norm form of $Q_i$.  As the Rost invariant of $\Spin_{16}$ ``factors through" $\SO_{16}$ to give the Arason invariant, the image of $\nu$ in $H^1(K, \Spin_{16})$ is in the kernel of the Rost invariant.

As for surjectivity, if $\eta$ is in $H^1(K, \Spin_{16})_0$ then $q_\eta = \qform{\alpha} q_1 q_2$ where $q_1, q_2$ are 2-Pfister forms.  We define $\nu$ to be $(0, Q_1, Q_2)$ where $Q_i$ is a quaternion algebra with norm $q_i$.  Then the image of $\nu$ in $H^1(K, \Spin_{16})$ is $\zeta \cdot \eta$ for some $\zeta \in H^1(K, Z)$, and we deduce that $\zeta \cdot \nu$ maps to $\eta$.
\end{proof}

It follows from the lemma (using \cite[5.3]{G:lens}) that the module of invariants $H^1(*, \Spin_{16})_0 \ra H^\bullet(*, \Zm2)$ injects into the module of invariants $H^1(*, Z \times V \times V) \ra H^\bullet(*, \Zm2)$.  But we know this larger module by \cite[p.~40]{GMS} or \cite[6.7]{G:lens}: it is spanned by products $\pi_1 \pi_2 \pi_3$ for $\pi_1 \!: H^1(*, Z) \ra H^\bullet(*, \Zm2)$ and $\pi_2, \pi_3$ invariants $H^1(*, V) \ra H^\bullet(*, \Zm2)$ composed with projection on the 2nd or 3rd term in the product.

For the rest of this section, we use the notation $(\zeta, Q_1, Q_2)$ for elements of $H^1(K, Z \times V \times V)$ as in the proof of Lemma \ref{s16.surj}.

\begin{lem} \label{s16.zero}
Fix $(\zeta, Q_1, Q_2) \in H^1(K, Z \times V \times V)$.  If $[Q_1] \cdot [Q_2] = 0$ in $H^4(K, \Zm2)$, then the image of $(\zeta, Q_1, Q_2)$ in $H^1(K, \Spin_{16})$ is zero.
\end{lem}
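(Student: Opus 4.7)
The plan is to split the statement into two independent pieces: (i) that $(\zeta,0,0)$ maps to the trivial class in $H^1(K,\Spin_{16})$ for every $\zeta\in H^1(K,Z)$, and (ii) that $(0,Q_1,Q_2)$ maps to the trivial class under the hypothesis $[Q_1]\cdot[Q_2]=0$. Both hinge on the observation that the center $Z$ of the split group $\Spin_{16}$ lies in a split maximal torus $T$, so that Hilbert~90 gives $H^1(K,T)=0$ and annihilates central contributions to $H^1$.

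For (i), the inclusion $Z\subset T\subset\Spin_{16}$ yields a factorization $H^1(K,Z)\to H^1(K,T)\to H^1(K,\Spin_{16})$, which is zero since the middle term vanishes. For (ii), I would identify the image of $(0,Q_1,Q_2)$ in $H^1(K,\SO_{16})$ exactly as in the proof of Lemma \ref{s16.surj}: it corresponds to the 16-dimensional 4-Pfister form $q_1\otimes q_2$, where $q_i$ is the norm form of $Q_i$. Its Arason invariant is $e_4(q_1\otimes q_2)=[Q_1]\cdot[Q_2]$, which vanishes by hypothesis; hence $q_1\otimes q_2\in I^5 K$, and since $\dim(q_1\otimes q_2)=16<2^5$, the Arason--Pfister Hauptsatz forces it to be hyperbolic. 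To lift this triviality back to $\Spin_{16}$, I would use the exact sequence of pointed sets coming from $1\to\mu_2\to\Spin_{16}\to\SO_{16}\to 1$: the kernel of $H^1(K,\Spin_{16})\to H^1(K,\SO_{16})$ equals the image of $H^1(K,\mu_2)\to H^1(K,\Spin_{16})$, and since $\mu_2\subset Z\subset T$ the same Hilbert~90 argument shows this image is trivial. Hence $(0,Q_1,Q_2)$ maps to the trivial class in $H^1(K,\Spin_{16})$.

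To combine (i) and (ii), I would observe that the actions of $Z$ and of the image of $V\times V$ on the trivial $\Spin_{16}$-torsor by left translation commute, because $Z$ is central in $\Spin_{16}$. The twist of the trivial torsor by $(\zeta,Q_1,Q_2)$ is therefore the composition of the twists by $(\zeta,0,0)$ and $(0,Q_1,Q_2)$ in either order; each individual twist produces a trivial $\Spin_{16}$-torsor by (i) and (ii), so the composition is trivial. The main obstacle I foresee is the lifting step in (ii) from $\SO_{16}$ to $\Spin_{16}$; the crucial input is the structural fact that the spinor kernel $\mu_2$ sits inside the split torus $T$, killing the spinor-norm obstruction that would otherwise prevent lifting triviality from $\SO_{16}$ up to $\Spin_{16}$.
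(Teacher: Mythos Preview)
Your proof is correct and rests on the same two facts as the paper's: the hypothesis forces $q_1\otimes q_2$ to be hyperbolic (via Arason--Pfister), and central classes die in $H^1$ of a split semisimple group (via $Z\subset T$ and Hilbert~90). The organization differs, however. The paper does not decompose $(\zeta,Q_1,Q_2)$ into a central piece and a $(Q_1,Q_2)$ piece; instead it pushes the whole class $\eta$ down to $H^1(K,\PSO_{16})$ in one step, observes that the image there is the similarity class of $q_1\otimes q_2$, hence trivial, and concludes that $\eta$ lies in the image of $H^1(K,Z)$, which is zero. Going through $\PSO_{16}$ rather than $\SO_{16}$ absorbs your step~(i) and your combination step simultaneously, so the paper's argument is a few lines shorter.

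One small imprecision: you invoke Lemma~\ref{s16.surj} to identify the image of $(0,Q_1,Q_2)$ in $H^1(K,\SO_{16})$ as $q_1\otimes q_2$, but that lemma only identifies the image in $H^1(K,\PSO_{16})$; a priori the $\SO_{16}$ image is $\qform{\alpha}q_1\otimes q_2$ for some $\alpha$. This does not break your argument, since a scalar multiple of a hyperbolic form is hyperbolic, but it is exactly the reason the paper works with $\PSO_{16}$ instead.
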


\begin{proof}
The image $\eta$ of $(\zeta, Q_1, Q_2)$ has $q_\eta = \qform{\alpha_\eta} q_1 q_2$ where $q_i$ is the norm of $Q_i$.  If $[Q_1] \cdot [Q_2]$ is zero, then $q_1 \otimes q_2$ is in $I^5(k)$ and so is hyperbolic.  It follows that the image of $\eta$ in $H^1(K, \PSO_{16})$ is zero, hence $\eta$ is in the image of $H^1(K, Z)$.  But $\Spin_{16}$ is split semisimple, so the image of $H^1$ of the center in $H^1$ of the group is zero.
\end{proof}

\begin{proof}[Proof of Proposition \ref{s16.basis}] The invariant $f_4$ sends $(\zeta, Q_1, Q_2)$ to $[Q_1] \cdot [Q_2]$.  Lemma \ref{s16.zero} combined with arguments like those in \cite[pp.~43, 44]{GMS} shows that every invariant of $H^1(*, \Spin_{16})_0$ restricts to one of the form $\la + \phi \cdot f_4$ for uniquely determined $\la \in H^\bullet(k, \Zm2)$ and $\phi \!: H^1(*, Z) \ra H^\bullet(*, \Zm2)$, i.e., is given by the formula
\[
(\zeta, Q_1, Q_2) \mapsto \la + \phi(\zeta) \cdot [Q_1] \cdot [Q_2].
\]
The collection of such invariants of $H^1(*, Z \times V \times V)$ forms a free $H^\bullet(k, \Zm2)$-module with basis
\[
1,\quad f_4,\quad \chi_v \cdot f_4, \quad \chi_h \cdot f_4,\quad \chi_v \cdot \chi_h \cdot f_4,
\]
where $\chi_v$ and $\chi_h$ denote the maps $H^1(*, Z) \ra H^1(*, \Zm2)$ defined by restricting to $Z$ the vector representation $\Spin_{16} \ra \SO_{16}$ and the half-spin representation $\Spin_{16} \ra \hspin_{16}$ implicit in our root-system description above.   Obviously, $f_5$ restricts to be $\chi_v \cdot f_4$.

At this point, it suffices to prove:
\begin{equation} \label{u5.res}
\text{\emph{$u_5$ restricts to be $\chi_h \cdot f_4$.}}
\end{equation}
Indeed, this statement implies that $u_5$ is zero when $q_\eta$ is isotropic, hence by \cite[10.2]{G:lens} the formula for $u_6$ gives a well-defined invariant; it obviously restricts to $\chi_v \cdot u_5 = \chi_v \cdot \chi_h \cdot f_4$ on $H^1(*, Z \times V \times V)$.  Spanning and linear independence follow from the previous paragraph.

We now prove \eqref{u5.res}.  The restriction of $u_5$ sends zero to zero, so it is $\phi \cdot f_4$ for some $\phi \!: H^1(*, Z) \ra H^1(*, \Zm2)$ that itself sends zero to zero.  Therefore (by \cite[p.~40]{GMS}) $\phi$ is induced by some homomorphism $\chi \!: Z \ra \Zm2$.  As $u_5$ is defined by pulling back along the map $\Spin_{16} \ra \E_8$, one quickly sees that $\chi$ must be zero or $\chi_h$.  As $\chi$, the zero invariant, and $\chi_h$ are all defined over $\Q$, it suffices to prove that $\chi$ is not the zero invariant in the case where $k = \Q$.  Example 15.1 of \cite{G:deg16} gives a class $\nu \in H^1(\R, Z \times V \times V)$ whose image in $H^1(\R, \E_8)$ is the compact $\E_8$, on which $u$ is nonzero.  Hence the restriction of $u_5$ to $Z \times V \times V$ is not zero over $\Q$ and must be $\chi_h \cdot f_4$.
\end{proof}

\section{Essential dimension of $H^1(*, \Spin_{16})_0$}

The following is a corollary of the previous section:

\begin{cor}
The essential dimension of the functor $H^1(*, \Spin_{16})_0$ over every field of characteristic zero is $6$.
\end{cor}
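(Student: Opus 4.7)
The plan is to prove matching upper and lower bounds of $6$ on the essential dimension.

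For the upper bound I would apply Lemma \ref{s16.surj}.  The center $Z$ of $\Spin_{16}$ is isomorphic to $(\Zm{2})^2$ (because $16 \equiv 0 \pmod 4$), and $V = (\Zm{2})^2$ by construction, so the algebraic group $Z \times V \times V$ is isomorphic to $(\Zm{2})^6$.  Consequently $H^1(K, Z \times V \times V) = (K^\times/K^{\times 2})^6$ for every extension $K/k$, and any such class is specified by $6$ square classes, hence is defined over a subfield of $K$ of transcendence degree at most $6$ over $k$.  By Lemma \ref{s16.surj}, every class in $H^1(K, \Spin_{16})_0$ lifts to one in $H^1(K, Z \times V \times V)$, so the same bound applies and gives $\mathrm{ed}(H^1(*, \Spin_{16})_0) \le 6$.

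For the lower bound I would invoke the nontrivial cohomological invariant $u_6$ of degree $6$ with coefficients in $\Zm{2}$ provided by Proposition \ref{s16.basis}, together with the standard principle that a nontrivial $H^d(*, \Zm{p})$-valued invariant of a functor forces its essential $p$-dimension to be at least $d$ (as recorded in \cite{GMS}); this yields $\mathrm{ed}_2(H^1(*, \Spin_{16})_0) \ge 6$, whence $\mathrm{ed}(H^1(*, \Spin_{16})_0) \ge 6$ as well.

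The main obstacle is essentially none: both directions follow quickly from the two preceding results, so the argument reduces to bookkeeping.  The one ingredient worth double-checking is the identification $Z \cong (\Zm{2})^2$ of the center of $\Spin_{16}$, which is standard for $\Spin_{2n}$ with $n$ even.
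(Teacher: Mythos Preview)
Your argument is correct and matches the paper's proof essentially verbatim: the upper bound comes from the surjection in Lemma~\ref{s16.surj} with source $(\Zm2)^6$, and the lower bound from the nonzero degree~$6$ invariant $u_6$ of Proposition~\ref{s16.basis}. The only cosmetic difference is that the paper cites \cite[Lemma~6.9]{RY} rather than \cite{GMS} for the principle that a nontrivial degree-$d$ cohomological invariant forces essential dimension $\ge d$.
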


\begin{proof}
The existence of the nonzero invariant $u_6 \!: H^1(*, \Spin_{16})_0 \ra H^6(*, \Zm2)$ implies that the essential dimension is at least 6 by \cite[Lemma 6.9]{RY}; this is the interesting inequality.  One can deduce that the essential dimension is at most 6 by, 
for example, the surjectivity in Lemma \ref{s16.surj}.
\end{proof}

By way of contrast, the essential dimension of the functor $H^1(*, \Spin_{16})$ (without restricting to the kernel of the Rost invariant) is 24 by \cite[Remark 3.9]{BRV}.

\section{Galois descent for representations of finite groups}

In this section, we restate some obsevations of Serre from \cite{Se00}
and \cite{GrRy98} regarding projective embeddings of simple groups
in exceptional algebraic groups. Combining these results with the $u$-invariant
for $\E_8$ gives some new embeddings results, see Example~\ref{E8.eg} below.

Let $A$ be an abstract finite group and $G$ a split semisimple linear algebraic
group defined over $\qq$. Fix a faithful representation $G\to\GL_N$ defined over $\qq$.
Further on fix a monomorphism $\varphi\colon A\to G(\overline k)$, where $\overline k$ is an algebraically
closed field of characteristic zero.

\begin{defn}
Let $F$ be a field extension of $\qq$.
We say that the character of a representation $A\to G(\overline k)\to\GL_N(\overline k)$
is {\it defined over $F$}, if all its values belong to $F$.
\end{defn}

Let $\chi$ be the character of the representation
$$\varphi\colon A\to G(\overline k)\to\GL_N(\overline k)$$ and $F$ its
field of definition. Assume additionally that
$Z_{G(\overline k)}(A)=1$ and that there is exactly one
$G(\overline k)$-conjugacy class of
homomorphisms $A\to G(\overline k)$ with character $\chi$.

The following theorem can be extracted from Serre's paper \cite[2.5.3]{Se00}:
\begin{thm}\label{thm2}
In the above notation there exists a twisted form $G_0$ of $G$ defined over $F$ together
with a monomorphism $A\to G_0(F)$.
Moreover, for a field extension $K/F$
there is a representation $A\to G(K)$ with character $\chi$
iff $G\simeq G_0$ over $K$.
\end{thm}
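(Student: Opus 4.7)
The plan is to carry out Galois descent at the level of homomorphisms $A\to G(\overline k)$. Identify $\overline k$ with an algebraic closure $\overline F$ of $F$ and set $\Gamma=\mathrm{Gal}(\overline F/F)$. For each $\sigma\in\Gamma$ the twisted map $\sigma\varphi\!: a\mapsto \sigma(\varphi(a))$ is again a homomorphism $A\to G(\overline F)$, and because $\chi$ takes values in $F$ it still has character $\chi$. The uniqueness hypothesis therefore yields $g_\sigma\in G(\overline F)$ with $\sigma\varphi=g_\sigma\varphi g_\sigma^{-1}$, and the centralizer hypothesis $Z_{G(\overline F)}(\varphi(A))=1$ forces $g_\sigma$ to be unique. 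A routine calculation from $(\sigma\tau)\varphi=\sigma(\tau\varphi)$ combined with this uniqueness then shows that $\sigma\mapsto g_\sigma$ is a $1$-cocycle with values in $G(\overline F)$ acting on itself by inner automorphisms. Let $G_0$ be the inner twist of $G$ by this cocycle; it is defined over $F$, and by construction the twisted Galois action on $G_0(\overline F)=G(\overline F)$ fixes each $\varphi(a)$, yielding the desired monomorphism $A\hookrightarrow G_0(F)$.

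For the equivalence, first suppose $G\simeq G_0$ over $K$. By definition of inner twist the restriction of $[g_\sigma]$ to $\mathrm{Gal}(\overline K/K)$ is a coboundary, so some $h\in G(\overline K)$ trivializes the cocycle; conjugation of $\varphi$ by $h$ then lands in $G(K)$, and because characters in $\GL_N$ are conjugation-invariant the resulting representation $A\to G(K)$ has character $\chi$. Conversely, suppose $\psi\!:A\to G(K)$ has character $\chi$. The uniqueness hypothesis applied over $\overline K$ writes $\psi$ as a conjugate of $\varphi$ by some $h\in G(\overline K)$. Imposing $\mathrm{Gal}(\overline K/K)$-invariance of $\psi$ together with $\sigma\varphi=g_\sigma\varphi g_\sigma^{-1}$ forces a quantity built from $h$, $\sigma(h)$, and $g_\sigma$ to lie in $Z_{G(\overline K)}(\varphi(A))=1$; the resulting equation is exactly what is needed to trivialize $g_\sigma$ as a cocycle over $K$, so $G\simeq G_0$ over $K$.

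The only real subtlety I anticipate is bookkeeping with cocycle conventions (left versus right, $g_\sigma$ versus $g_\sigma^{-1}$, inner versus outer twists, and the convention used to identify the twist with a group defined over $F$). Once a convention is fixed consistently, every step reduces to the two hypotheses built into the statement: triviality of $Z_{G(\overline k)}(\varphi(A))$ (which upgrades existence of $g_\sigma$ to uniqueness and thereby to functoriality in $\sigma$) and uniqueness of the $G(\overline k)$-conjugacy class with character $\chi$ (which is what produces $g_\sigma$ in the first place and also drives the converse direction of the equivalence).
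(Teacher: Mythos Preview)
Your argument is essentially the paper's proof rephrased in the language of explicit cocycles rather than torsors: where the paper introduces the $G$-torsor $P$ of all homomorphisms $A\to G$ with character $\chi$ and lets $\eta\in H^1(F,G)$ be its class, you manufacture $g_\sigma$ directly from transitivity and freeness of the conjugation action. The construction of $G_0$ and of the embedding $A\hookrightarrow G_0(F)$, and the direction ``representation over $K$ with character $\chi$ $\Rightarrow$ $G\simeq G_0$ over $K$'', match the paper.

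There is, however, one genuine gap in the other direction of the equivalence. You write that if $G\simeq G_0$ over $K$ then ``by definition of inner twist'' the class $[g_\sigma]$ restricts to a coboundary in $G(\overline K)$. But an isomorphism of algebraic groups $G\simeq G_0$ only says that the image of the cocycle in $H^1(K,\Aut(G))$ is trivial; two inner forms can in principle be isomorphic via an outer automorphism without the cocycle dying in $H^1(K,G)$. The paper closes this gap by observing that the hypothesis $Z_{G(\overline k)}(A)=1$ forces $Z(G)=1$, so $G$ is adjoint, and then uses that for adjoint $G$ the map $H^1(K,G)\to H^1(K,\Aut(G))$ has trivial kernel (via the splitting $\Aut(G)\cong G\rtimes\Out(G)$ with $\Out(G)$ a constant finite group scheme). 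You should insert this step; once you do, your trivialization by $h\in G(\overline K)$ and the conjugation $h\varphi h^{-1}$ go through exactly as you describe.
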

\begin{proof}
Let $$P=\{\alpha\colon A\to G\mid \alpha\text{ is a representation with character }\chi\}.$$
Then $G$ acts on $P$ by conjugation. By assumptions on $A$ and $G$ this action is
transitive. Moreover, the condition on
the centralizer guaranties that
this action is simply transitive, i.e., for any $\alpha,\beta\in P(\overline K)$
there exists a unique $g\in G(\overline K)$ with $\beta=\alpha^g$.
Thus, $P$ is a $G$-torsor. Since all values of the character $\chi$
belong to the field $F$, the torsor $P$ is defined over $F$.

Let $\eta\in H^1(F,G)$ be the $1$-cocycle corresponding to the torsor $P$.
Then $\sigma\cdot\varphi=\eta_\sigma^{-1}\varphi\eta_\sigma$ for all $\sigma$
in the absolute Galois group $\mathrm{Gal}(\overline F/F)$.
Define now $G_0$ as the twisted form of $G$ over $F$ by the torsor $P$.
The group $G_0$ is defined out of $G(\overline F)$ by a twisted Galois action:
$$\sigma*g=\eta_\sigma(\sigma\cdot g)\eta_\sigma^{-1}.$$
Now it is easy to see that the homomorphism $\varphi\colon A\to G(\overline F)$
is an $F$-defined homomorphism $A\to G_0(F)$.

Let $K/F$ be a field extension. If there is a reperesentation $A\to G(K)$
with character $\chi$, then obviously $G$ and $G_0$ are isomorphic over $K$.
Conversely, if $G$ and $G_0$ are isomorphic over $K$, then the image of the
cocycle $\eta$ in $H^1(K,\Aut(G))$ is zero. Since the centralizer of $A$
in $G$ is trivial, the group $G$ is adjoint. Therefore $\eta$ is already zero
in $H^1(K,G)$.
\end{proof}

To characterize the isomorphism criterion of Theorem~\ref{thm2}
we need the following proposition.

\begin{prop}\label{pro2}
For each Killing-Cartan type $\Phi$ in the table
\begin{center}
\begin{tabular}{c|ccc}
Type $\Phi$ & $\F$ & $\G$ & $\E_8$ \\ \hline
$n$ & $3$ & $3$ & $5$
\end{tabular}
\end{center}
there is a unique algebraic group $G_0$ of type $\Phi$ that is compact at every real place of every (equivalently, a particular) number field; it is defined over $\Q$.  For every field $K$ of characteristic zero and $n$ as in the table, the following are equivalent:
\begin{enumerate}
\item $G_0\otimes K$ is split.
\item $(-1)^n=0\in H^n(K,\zz/2)$.
\item $-1$ is a sum of $2^{n-1}$ squares of the field $K$.
\end{enumerate}
\end{prop}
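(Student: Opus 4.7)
The plan is to treat three items in turn: existence and uniqueness of the $\Q$-form $G_0$, the classical equivalence $(2) \Leftrightarrow (3)$, and the group-theoretic equivalence $(1) \Leftrightarrow (2)$.

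For existence, let $O_0$ be the Cayley octonion algebra over $\Q$ (norm form $\langle 1,1,1,1,1,1,1,1 \rangle$) and $A_0 := H_3(O_0)$ the associated reduced Albert algebra. Take $G_0 := \Aut(O_0)$ when $\Phi = \G$, $G_0 := \Aut(A_0)$ when $\Phi = \F$, and $G_0 := t(A_0)$ when $\Phi = \E_8$; each is defined over $\Q$ and has compact real completion. For uniqueness, I appeal to the Hasse principle for number fields (Chernousov's for $\E_8$, as used in Example~\ref{Ti.R}, and its classical analogs for $\G$ and $\F$): at every finite place of any number field all forms of $\Phi$ are split (by cohomological dimension $\le 2$ together with Serre's Conjecture II at $p$-adic fields for $\E_8$), so a $\Q$-form of $\Phi$ is pinned down by its real completion.

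The equivalence $(2) \Leftrightarrow (3)$ is Pfister form theory: via Voevodsky's proof of the Milnor conjecture, $(-1)^n \in H^n(K, \Zm2)$ represents the $n$-fold Pfister form $\pform{-1, \ldots, -1}$, which equals $\langle 1, 1, \ldots, 1 \rangle$ of dimension $2^n$. This Pfister form is hyperbolic iff it is isotropic iff $-1$ is a sum of at most $2^n - 1$ squares, and by Pfister's theorem that the Stufe is always a power of $2$, this is equivalent to $-1$ being a sum of $2^{n-1}$ squares.

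For $(1) \Rightarrow (2)$, if $G_0 \otimes K$ is split then every cohomological invariant of it vanishes; in each case the invariant on $G_0 \otimes K$ equals $(-1)^n$. For $\G$ and $\F$ the invariant is $f_3 = (-1)^3$, and for $\E_8$, Proposition~\ref{pro} gives $u(G_0 \otimes K) = f_5(A_0 \otimes K) = (-1)^5$. For $(2) \Rightarrow (1)$, the case $\Phi = \G$ is immediate: $(-1)^3 = 0 \in H^3(K, \Zm2)$ says the octonion norm is hyperbolic, so $O_0 \otimes K$ and $\Aut(O_0 \otimes K)$ split. The case $\Phi = \F$ follows by the same token: $O_0 \otimes K$ splits, so $A_0 \otimes K = H_3(O_0 \otimes K)$ is the split Albert algebra, hence $\Aut(A_0 \otimes K)$ is split. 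The main obstacle is the $\E_8$ case, since $(-1)^5 = 0$ does not force $O_0 \otimes K$ to split and \eqref{odd.split} from $u = 0$ alone only gives splitting over an odd-degree extension. I would bypass this by using the Tits-construction expression $G_0 = t(A_0)$: the assumption gives $f_5(A_0 \otimes K) = 0$, while $g_3(A_0 \otimes K) = 0$ always because $A_0$ is reduced; then \ref{FG.inv} supplies a nonzero nilpotent in $A_0 \otimes K$, and Example~\ref{Ti.nil} concludes that $t(A_0 \otimes K) = G_0 \otimes K$ is split.
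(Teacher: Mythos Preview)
Your proof is correct and follows essentially the same route as the paper's: existence and uniqueness via explicit construction plus the Hasse principle, the classical Pfister-level argument for $(2)\Leftrightarrow(3)$, and for $\E_8$ the key observation that $G_0=t(A_0)$ so that $(-1)^5=0$ forces $f_5(A_0\otimes K)=0$, whence $A_0\otimes K$ has nilpotents and Example~\ref{Ti.nil} applies. The only cosmetic difference is that for $(1)\Rightarrow(2)$ in the $\E_8$ case you compute $u(G_0)=(-1)^5$ via Proposition~\ref{pro}, while the paper appeals directly to \eqref{odd.split}; your route is arguably cleaner since it pins down $u(G_0)$ over $\Q$ explicitly rather than only over $\R$.
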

\begin{proof}
The first sentence is a standard part of the Kneser-Harder-Chernousov Hasse
principle. The group $G_0$ is split at every finite place.

For the second claim, all cases but $\E_8$ are well-known.  For $\E_8$, if $G_0 \otimes K$ is split,
then $(-1)^5$ is zero by the existence of $u$; see \ref{odd.split}.
For the converse, $G_0$ equals $t(A)$
where $A$ is the unique Albert $\qq$-algebra with no nilpotents
(see Example~\ref{Ti.R}).  If $(-1)^5$ --- i.e., $f_5(A)$ --- is zero in
$H^5(K, \Zm2)$, then $A \otimes K$ has nilpotents and $G_0 \otimes K$
is split by Example \ref{Ti.nil}.
\end{proof}

In the following examples we denote as $\Alt_l$ the alternating group of degree $l$
and as $\zeta_l=e^{2\pi i/l}$ a primitive $l$-th root of unity.

\begin{eg}[type $\G$] \label{G2.eg}
Let $G$ denote the split group of type $\G$, $A=G(\mathbb{F}_2)$ (resp. $\PSL(2,8)$, $\PSL(2,13)$),
and $K$ a field of characteristic zero. Then there is
an embedding $A\to G(K)$ iff $-1$ is a sum of $4$ squares of $K$
and $\zeta_9+\bar\zeta_9\in K$ (for $\PSL(2,8)$), resp. $\sqrt{13}\in K$ (for $\PSL(2,13)$).

Indeed, fix the minimal fundamental representation $G\to\GL_7$.
By \cite[Theorem~9(3,4,5)]{A87} there is a representation $\varphi\colon A\to G(\overline k)$
whose character $\chi$ is defined over $F=\qq$
(resp. $F=\qq(\zeta_9+\bar\zeta_9)$, $F=\qq(\sqrt{13})$).
Moreover, $G$ acts transitively on the homomorphisms $A\to G(\overline k)$
with character $\chi$ (see \cite{A87} and \cite[Cor.~1 and 2]{Griess}).

By \cite[9.3(1)]{A87} the representation $\varphi$ is irreducible.
Therefore $Z_{G(\overline k)}(A)=1$.
Thus, all conditions of Theorem~\ref{thm2}
are satisfied. Therefore there is a twisted form $G_0$ of $G$ defined over $F$
and an embedding $A\to G_0$.

In particular, there is an embedding $A\to G_0(\rr)$.
Since any finite subgroup of a Lie group is contained in its maximal
compact subgroup,
it is easy to see that $G_0\otimes_F\rr$ is compact for all embeddings
of $F$ into $\rr$. Moreover, by Theorem~\ref{thm2} we have an embedding
$A\to G(K)$ iff $G_0$ and $G$ are isomorphic over $K$. By Proposition~\ref{pro2}
the latter occurs iff $-1$ is a sum of $4$ squares of $K$.

(Thus, we have recapitulated the argument from \cite[2.5.3]{Se00}).
\end{eg}

\begin{eg}[type $\E_8$] \label{E8.eg}
Let $G$ denote the split group of type $\E_8$, $A=\PGL(2,31)$ (resp. $A=\SL(2,32)$),
and $K$ a field of characteristic zero.
We view $G$ as a subgroup of $\GL_{248}$ via the adjoint representation.
There is an embedding
$A\to G(K)$ iff $-1$ is a sum of $16$ squares and
$\zeta_{11}+\bar\zeta_{11}\in K$ (for $\SL(2,32)$).

Indeed, by \cite[Theorem~2.27 and Theorem~3.25]{GrRy98}
there exists an embedding $A\to G(\overline k)$ whose character
is defined over $F=\qq$ (resp.\ $F=\qq(\zeta_{11}+\bar\zeta_{11})$).
Using \cite{GrRy98} one can check all conditions of Theorem~\ref{thm2}
(cf. Example \ref{G2.eg}).

It follows by Theorem~\ref{thm2} that there is an embedding $A\to G_0(F)$
for some twisted form $G_0$ of $G$.
Again as in Example~1 one can see that $G_0$ is the unique group such that $G_0\otimes_F\rr$
is compact for all embeddings of $F$ into $\rr$. Finally by Proposition~\ref{pro2}
$G$ and $G_0$ are isomorphic over a field extension $K/F$ iff $-1$ is a sum
of $16$ squares in $K$.

Roughly speaking, we have added the facts about the compact $\E_8$
contained in the proof of Proposition~\ref{pro2} (which uses the existence
of the $u$-invariant) to Serre's appendix \cite[App.~B]{GrRy98}.
\end{eg}

In the same way one can get the following example:

\begin{eg}[type $\A_1$]
Let $G=\PGL_2$, $A=\Alt_4$ (resp. $\Alt_5$), and $K$ a field of characteristic
zero. Then there is an embedding $A\to G(K)$ iff $-1$ is a sum of $2$ squares
and for $\Alt_5$ additionally $\sqrt 5\in K$ (see \cite{Se80}).
\end{eg}

\bibliographystyle{chicago}

\end{document}